\newtheorem{theorem}{Theorem}[section]
\newtheorem{proposition}[theorem]{Proposition}
\newtheorem{corollary}[theorem]{Corollary}
\newcommand\Z{\mathbb{Z}}
\newcommand\HF{\textit{HF}}
\newcommand{\HFh}{\widehat{\HF}}
\newcommand{\HFhat}{\HFh}
\newcommand{\HFp}{\HF^+}
\newcommand{\F}{\mathbb{F}}
\newcommand{\HFred}{\HF_\textup{red}}
\newcommand{\CFKi}{\mathit{CFK}^\infty}
\newcommand\co{\colon}
\newcommand\Cone{\operatorname{Cone}}
\newcommand\rk{\operatorname{rk}}
\newcommand\mfs{\mathfrak{s}}
\newcommand\mft{\mathfrak{t}}
\theoremstyle{definition}
\theoremstyle{remark}
\newtheorem{remark}[theorem]{Remark}
\title{Dehn surgery and non-separating two-spheres}
\date{}
\author[Jennifer Hom]{Jennifer Hom}
\thanks{The first author was partially supported by NSF grant DMS-1552285.}
\address{School of Mathematics, Georgia Institute of Technology, Atlanta, GA, USA}
\email{hom@math.gatech.edu}
\author[Tye Lidman]{Tye Lidman}
\thanks{The second author was partially supported by NSF grant DMS-1709702 and a Sloan Fellowship.}
\address{Department of Mathematics, North Carolina State University, Raleigh, NC 27607}
\email{tlid@math.ncsu.edu}
\begin{document}

\begin{abstract}  
When can surgery on a null-homologous knot $K$ in a rational homology sphere produce a non-separating sphere? We use Heegaard Floer homology to give sufficient conditions for $K$ to be unknotted. We also discuss some applications to homology cobordism, concordance, and Mazur manifolds.
\end{abstract}

\maketitle

\section{Introduction}
One of the most fundamental constructions in three-manifold topology is Dehn surgery.  By the theorems of Lickorish and Wallace, every closed, connected, oriented three-manifold is obtained by surgery on a link in $S^3$.  Additionally, 4-dimensional 2-handle attachments induce a cobordism from a three-manifold to the result of surgery.  It is therefore a fundamental question to understand the behavior of three-manifolds under Dehn surgery.  In this note, we focus on surgery on knots.  Two main questions are {\em geography} (which three-manifolds are obtained by surgery on a knot) and {\em botany} (which knots surger to a fixed three-manifold).  

For example, Gabai's ``Property R theorem'' \cite{Gabai:propR} shows that only 0-surgery on the unknot in $S^3$ can produce $S^2 \times S^1$.  The proof passes through taut foliations, and as a result, shows that 0-surgery on a non-trivial knot is not $S^2 \times S^1$ {\em and} is prime (i.e., the 0-surgery is irreducible), giving strong geography constraints.  Note that this implies that a four-manifold built with one 0-handle, one 1-handle, one 2-handle, and boundary $S^3$ is necessarily diffeomorphic to $B^4$.  Similarly, Gordon and Luecke's celebrated ``knot complement theorem'' \cite{GordonLuecke} answers the botany problem for surgeries from $S^3$ to $S^3$: only the unknot admits non-trivial $S^3$ surgeries. This shows that a closed four-manifold with one 0-handle, one 2-handle, and one 4-handle is necessarily diffeomorphic to $\mathbb{C}P^2$.   

In this article, we study a more general question: when can surgery on a knot in a three-manifold (other than $S^3$) produce an $S^2 \times S^1$ summand?  In previous work of Daemi, the second author, Vela-Vick, and Wong \cite{DLVVW}, some constraints were given on the geography problem.  Here, we answer both the botany and geography problems in several different settings.  While many of the arguments below are standard, we believe it is beneficial to the community for these results to be written down.   
   
We begin with a generalization of Property R to arbitrary rational homology spheres.  
\begin{theorem}\label{thm:main}
Let $Y$ be a rational homology sphere and $K$ a nullhomologous knot in $Y$.  Suppose $Y_0(K) = N \# S^2 \times S^1$.  If $\dim \HFhat(N) = \dim \HFhat(Y)$, then $N = Y$ and $K$ is unknotted.  Otherwise, $\dim \HFhat(N) < \dim \HFhat(Y)$.  
\end{theorem}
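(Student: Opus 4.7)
The plan combines the Künneth formula for connected sums with a Thurston-norm constraint coming from the non-separating sphere, followed by Heegaard Floer surgery arguments for the residual case.

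First, by the Ozsváth--Szabó Künneth formula and $\HFhat(S^2 \times S^1) \cong \F^2$, the hypothesis gives
$$\dim \HFhat(Y_0(K)) = 2 \dim \HFhat(N),$$
which reduces the theorem to the estimate $\dim \HFhat(Y_0(K)) \leq 2 \dim \HFhat(Y)$, with equality iff $K$ is unknotted.

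Next, I would use the non-separating sphere $S \subset Y_0(K)$ to constrain the Seifert genus of $K$. Let $\hat{F}$ denote the capped-off Seifert surface. Since $Y$ is a rational homology sphere, $H_2(Y_0(K); \mathbb{Q}) \cong \mathbb{Q}$ is generated by both $[\hat{F}]$ and $[S]$, so $[\hat{F}] - a[S]$ is torsion for some nonzero $a \in \Z$. The Thurston norm factors through $H_2/\text{torsion}$ and vanishes on sphere classes, so $|[\hat{F}]|_T = 0$. Combined with Gabai's formula $|[\hat{F}]|_T = 2g(K) - 2$ for $g(K) \geq 1$, this forces $g(K) \in \{0, 1\}$. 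The case $g(K) = 0$ gives $K$ unknotted directly. Moreover, the Heegaard Floer adjunction inequality forces $\HFhat(Y_0(K), \mathfrak{s}) = 0$ whenever $\langle c_1(\mathfrak{s}), [\hat{F}] \rangle \neq 0$, so only $\mathrm{Spin}^c$ structures with $c_1$ paired trivially with $[\hat{F}]$ can contribute.

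The hard part will be handling the case $g(K) = 1$. Here I would invoke the Ozsváth--Szabó integer surgery exact triangle
$$\cdots \to \HFhat(Y) \to \HFhat(Y_0(K)) \to \HFhat(Y_1(K)) \to \HFhat(Y) \to \cdots$$
together with the mapping cone formula for $\pm 1$-surgery on a nullhomologous knot. Since $\widehat{\mathit{HFK}}(Y, K)$ is supported in Alexander gradings $\{-1, 0, 1\}$ by the genus-detection property, the relevant mapping cone simplifies, and a rank analysis should yield the strict inequality $\dim \HFhat(Y_0(K)) < 2 \dim \HFhat(Y)$ in this case. Hence equality in the main estimate can only occur for $g(K) = 0$, forcing $K$ to be the unknot. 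Finally, when $K$ is unknotted, $Y_0(K) = Y \# S^2 \times S^1$, and comparison with the given decomposition $N \# S^2 \times S^1$ yields $N = Y$ by uniqueness of prime decomposition.
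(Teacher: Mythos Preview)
Your reduction via the K\"unneth formula to the inequality $\dim \HFhat(Y_0(K)) \leq 2\dim \HFhat(Y)$ is fine, as is the observation (via adjunction against the sphere $S$) that only the $\mft_0$ Spin$^c$ structures contribute. But after that the argument has two genuine gaps.

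First, the Thurston-norm step. Gabai's theorem that the capped Seifert surface realizes the Thurston norm in the $0$-surgery requires the knot exterior to be irreducible; for a nullhomologous knot in an arbitrary rational homology sphere this is not given, and you do not address it. More importantly, even granting $g(K)\leq 1$, the residual case is where all the content lies, and you have not proved it. Saying ``a rank analysis should yield the strict inequality'' is not an argument: from the untwisted mapping cone one gets
\[
\dim \HFhat(Y_0(K),\mft_0)=\dim H_*(\widehat A_0)+\dim\HFhat(Y,\mft)-2\,\rk(\widehat v_{0,*}+\widehat h_{0,*}),
\]
and neither the exact triangle nor the genus-$1$ support of $\widehat{\mathit{HFK}}$ by itself bounds $\dim H_*(\widehat A_0)$ by $\dim\HFhat(Y,\mft)$, nor forces $\rk(\widehat v_{0,*}+\widehat h_{0,*})>0$. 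Both facts are exactly what the paper extracts from \emph{twisted} coefficients: the vanishing of $\HFhat(N\#S^2\times S^1;\F[[T,T^{-1}])$ makes $\widehat v_{0,*}+T\widehat h_{0,*}$ an isomorphism over $\F[[T,T^{-1}]$, giving $\dim H_*(\widehat A_0)=\dim\HFhat(Y,\mft)$ and, when $\widehat v_{0,*}=\widehat h_{0,*}$, that $\widehat v_{0,*}$ itself is an isomorphism (hence $g(K)=0$). Without this input your $g(K)=1$ case cannot be closed, and in fact once you import it the Thurston-norm detour becomes unnecessary.

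By contrast, the paper never invokes Gabai or the Thurston norm. It works entirely inside Heegaard Floer: the twisted mapping cone gives $\dim H_*(\widehat A_s)=\dim\HFhat(Y,\mft)$ for all $s$; then assuming $\dim\HFhat(N)\geq\dim\HFhat(Y)$ forces $\widehat v_{0,*}=\widehat h_{0,*}$, hence $(1+T)\widehat v_{0,*}$ is an isomorphism, hence $\widehat v_{0,*}$ is; the factorization of $\widehat v_0$ through $\widehat v_s$ then makes every $\widehat v_{s,*}$ an isomorphism, so $g(K)=0$ directly.
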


Theorem~\ref{thm:main} has a number of immediate applications.

\begin{corollary}
Let $K$ be a nullhomotopic knot in a prime rational homology sphere $Y$.  If $Y_0(K)$ contains a non-separating two-sphere, then $K$ is unknotted.  
\end{corollary}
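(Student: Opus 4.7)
My plan is to apply Theorem~\ref{thm:main} to the decomposition coming from the non-separating sphere, and to rule out the rank-drop alternative by using the nullhomotopy and primeness hypotheses to reduce $K$ to a knot in a 3-ball.

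The non-separating sphere yields $Y_0(K) = N \# (S^2 \times S^1)$ for some closed 3-manifold $N$. Since $K$ is nullhomotopic it is in particular nullhomologous, so Theorem~\ref{thm:main} applies: either $N = Y$ with $K$ unknotted (which is the desired conclusion), or $\dim \HFhat(N) < \dim \HFhat(Y)$. I aim to rule out the second alternative by exhibiting a connected sum decomposition
\[
Y_0(K) \;=\; Y \# S^3_0(K') \qquad \text{for some knot } K' \subset S^3.
\]
Granted this, the K\"unneth formula for $\HFhat$ under connected sum together with the general lower bound $\dim \HFhat(S^3_0(K')) \geq 2$ (from $b_1 = 1$) gives
\[
2 \dim \HFhat(N) \;=\; \dim \HFhat(Y_0(K)) \;=\; \dim \HFhat(Y) \cdot \dim \HFhat(S^3_0(K')) \;\geq\; 2 \dim \HFhat(Y),
\]
so $\dim \HFhat(N) \geq \dim \HFhat(Y)$, contradicting the second alternative. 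Hence the first case must hold and $K$ is unknotted.

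The main obstacle is producing the decomposition $Y_0(K) = Y \# S^3_0(K')$, or equivalently exhibiting $K$ as lying in a 3-ball inside $Y$. This is where the hypotheses combine: the nullhomotopy of $K$, the irreducibility of $Y$ (from prime plus $\mathbb{Q}\mathrm{HS}$), and the existence of the non-separating sphere in $Y_0(K)$ should together---via a handle-cancellation or Dehn's lemma-style argument in the surgery cobordism $Y\times I \cup h^2_K$---force $K$ into a 3-ball, and then $K'$ is simply $K$ viewed as a knot in $S^3$.
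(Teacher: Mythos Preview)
Your overall plan—apply Theorem~\ref{thm:main} and rule out the rank-drop alternative—has the right shape, but the paper dispatches the hard part in one line by citing \cite[Theorem 1.8]{DLVVW}, which shows directly that under these hypotheses $Y_0(K) = Y \# S^2 \times S^1$. Then $N = Y$, so $\dim \HFhat(N) = \dim \HFhat(Y)$ and Theorem~\ref{thm:main} gives that $K$ is unknotted immediately.

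The genuine gap in your proposal is precisely the step you flag as the ``main obstacle'': showing that $K$ lies in a $3$-ball. You offer no argument beyond the hope that a ``handle-cancellation or Dehn's lemma-style argument'' will work, but nullhomotopy together with irreducibility of $Y$ certainly does not by itself force a knot into a ball (otherwise every nullhomotopic knot in an irreducible rational homology sphere would be local, which is false). The non-separating sphere in $Y_0(K)$ must therefore do real work, and extracting that work is exactly the nontrivial $3$-manifold topology contained in the cited result from \cite{DLVVW}; it is not a routine consequence of Dehn's lemma. Incidentally, if you \emph{could} establish $K \subset B^3$, the Heegaard Floer detour through Theorem~\ref{thm:main} would be unnecessary: from $Y_0(K) = Y \# S^3_0(K')$ and the fact that a non-separating sphere cannot sit in the rational homology sphere summand $Y$, one sees that $S^3_0(K')$ contains a non-separating sphere, and Gabai's Property~R then gives $K'$ unknotted in $S^3$, finishing the argument directly.
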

\begin{proof}
It is shown in \cite[Theorem 1.8]{DLVVW} that under these hypotheses, $Y_0(K) = Y \# S^2 \times S^1$.  By Theorem~\ref{thm:main}, $K$ is unknotted.  
\end{proof}

\begin{corollary}\label{cor:trivial}
Let $Y$ be a rational homology sphere and let $W : Y \to Y$ be a rational homology cobordism with a handlebody decomposition with a total of two handles.  Then, $W$ is diffeomorphic to a product.  
\end{corollary}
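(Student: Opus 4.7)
My plan is to reduce Corollary~\ref{cor:trivial} to Theorem~\ref{thm:main}. By standard handle rearrangement and, if necessary, by turning $W$ upside down, I would reduce to the case where the two handles of $W$ are a 2-handle $h_2$ followed by a 3-handle $h_3$. The alternative two-handle configurations compatible with $W$ being a connected rational homology cobordism from $Y$ to $Y$ are either ruled out homologically (for example, a 1-handle paired with a 3-handle cannot balance in rational homology) or consist of a manifestly cancelling pair (a $0$-/$1$-handle pair or a $3$-/$4$-handle pair), for which $W \cong Y \times I$ is immediate. So I focus on the main case and write $W = (Y \times I) \cup h_2 \cup h_3$, where $h_2$ is attached along a knot $K \subset Y$ with framing $r$ and $h_3$ along a sphere $S \subset Y_r(K)$.

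Since $W$ is connected, $S$ must be non-separating in $Y_r(K)$, and surgery on $S$ produces $\partial_+ W = Y$; hence $Y_r(K) \cong Y \# S^2 \times S^1$. Combining this with $Y$ being a rational homology sphere, a Mayer--Vietoris calculation of $H_1(Y_r(K))$ forces $r = 0$ and $K$ to be nullhomologous in $Y$. Theorem~\ref{thm:main} then applies to $Y_0(K) = Y \# S^2 \times S^1$ with $N = Y$: since $\dim \HFhat(N) = \dim \HFhat(Y)$, the theorem yields that $K$ is unknotted.

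Finally, I would conclude $W \cong Y \times I$ from the fact that $K$ is unknotted with $0$-framing. A $0$-framed 2-handle along an unknot in $Y$ is equivalent (after a handle slide) to a 1-handle, so $h_2$ and $h_3$ are candidates for a cancelling pair. The attaching sphere $S$ of $h_3$ in $Y_0(K) \cong Y \# S^2 \times S^1$ is non-separating; because $Y$ is a rational homology sphere, and in particular has no $S^2 \times S^1$ summand in its prime decomposition, every non-separating sphere in $Y \# S^2 \times S^1$ is isotopic to the standard $S^2$-factor, which meets the belt circle of $h_2$ transversely in a single point. Hence $h_2$ and $h_3$ geometrically cancel, giving $W \cong Y \times I$. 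I expect the main obstacle to be this final cancellation argument, which relies on the classical sphere-uniqueness result for 3-manifolds of the form $Y \# S^2 \times S^1$.
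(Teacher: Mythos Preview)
Your proof is correct and follows essentially the same route as the paper's. The paper's argument is much terser: it observes that after flipping, $W$ is a 2-handle followed by a 3-handle, so $Y$ surgers to $Y \# S^2 \times S^1$, and then asserts that the result follows from Theorem~\ref{thm:main}. You fill in the steps the paper omits --- the homological argument that $K$ must be nullhomologous with framing $0$, and the handle-cancellation argument showing that once $K$ is unknotted the pair $(h_2,h_3)$ geometrically cancels. Your identification of the sphere-uniqueness input (that any non-separating sphere in $Y \# S^2\times S^1$ is isotopic to the standard fiber when $Y$ has no $S^2\times S^1$ summand) as the only nontrivial ingredient in that last step is exactly right; the paper leaves this entirely implicit.
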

\begin{proof}
Since $W$ is a rational homology cobordism, after possibly flipping $W$ upside down, $W$ consists of a single 2-handle and a single 3-handle.  Therefore, $Y$ has a surgery to $Y \# S^2 \times S^1$.  The result now follows from Theorem~\ref{thm:main}.  
\end{proof}

\begin{remark}
It seems reasonable to conjecture that a rational homology cobordism from a 3-manifold to itself without 3-handles is homeomorphic to a product.  It seems more ambitious, but still feasible, to believe that such a cobordism is diffeomorphic to a product.
\end{remark}

\begin{corollary}\label{cor:trivial-rank-1}
Suppose that there exists an integral homology cobordism $W$ from a rational homology sphere $Y$ to a three-manifold $Z$ consisting of a single 1-handle and a single 2-handle.  If $\dim \HFred(Z) = 1$, then $W$ is diffeomorphic to a product.  
\end{corollary}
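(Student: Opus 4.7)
The plan is to flip $W$ upside down, extract a null-homologous knot $K\subset Z$ with $Z_0(K)=Y\#S^2\times S^1$, feed this into Theorem~\ref{thm:main}, and use the constraint $\dim\HFred(Z)=1$ together with the integer-homology-cobordism hypothesis to force the equality case of that theorem, from which a diffeomorphism with a product will follow.

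Flipping $W$ and rearranging so that the 2-handle of the dual cobordism is attached before the 3-handle, let $K\subset Z$ denote the attaching circle of that 2-handle. The 3-handle caps off a non-separating $S^2$ in the intermediate manifold, so that manifold must be $Y\#S^2\times S^1$. Since $W$ is an integer homology cobordism and $Y$ is a rational homology sphere, $Z$ is also a rational homology sphere with $|H_1(Z)|=|H_1(Y)|$. The jump $b_1\to b_1+1$ across the 2-handle forces $K$ to be null-homologous in $Z$ and the framing to be zero; hence $Z_0(K)\cong Y\#S^2\times S^1$. Theorem~\ref{thm:main} applied to $K\subset Z$ then yields $\dim\HFhat(Y)\le\dim\HFhat(Z)$, with equality iff $Y=Z$ and $K$ is unknotted. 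Combined with $\dim\HFred(Z)=1$ and the identity $\dim\HFhat=|H_1|+2\dim\HFred$ for rational homology spheres, we compute $\dim\HFhat(Z)=|H_1(Y)|+2$. Since $\dim\HFhat(Y)\ge|H_1(Y)|$ and $\dim\HFhat(Y)-|H_1(Y)|$ is always even, the only possibilities are $\dim\HFhat(Y)\in\{|H_1(Y)|,\,|H_1(Y)|+2\}$, so it suffices to show that the smaller value cannot occur.

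I expect the main obstacle to be excluding $\dim\HFhat(Y)=|H_1(Y)|$, i.e., the case in which $Y$ is an L-space. To rule this out, one exploits the integer-homology-cobordism hypothesis: the cobordism maps $F_W$ and $F_{-W}$ induce isomorphisms on $\HF^\infty$, so if $Y$ is an L-space then $F_{-W}\circ F_W$ is an $\F[U]$-endomorphism of $\HFp(Y)=\bigoplus_{\mfs}\mathcal{T}^+_{d(Y,\mfs)}$ inducing the identity on $\HF^\infty$, and therefore is itself the identity. This splits $\HFp(Z)$ as $F_W(\HFp(Y))\oplus C$ with $C\subseteq\HFred(Z)$. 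Analyzing the factorization $F_{-W}=F_{h_3}\circ F_{h_2}$ through the 0-framed 2-handle map (described via the large surgery formula in terms of $\CFKi(Z,K)$) and the 3-handle map (projection onto one Künneth factor of $\HFp(Y\#S^2\times S^1)$), and using that the latter has no reduced summand when $Y$ is an L-space, a rank comparison should force $C=0$, contradicting $\dim\HFred(Z)=1$.

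With the L-space case eliminated, the equality case of Theorem~\ref{thm:main} delivers $Y=Z$ and that $K$ is unknotted in $Y$. Then $K$ bounds a disk in $Y$, and the belt sphere of the 0-framed 2-handle along $K$ intersects the non-separating $S^2$ (the attaching sphere of the 3-handle) transversely in exactly one point. This is the standard cancelling pair sitting inside $Y\times I$, so $-W$, and therefore $W$, is diffeomorphic to the product cobordism.
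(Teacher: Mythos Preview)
Your overall structure---flip $W$, extract the null-homologous knot $K\subset Z$ with $Z_0(K)=Y\#S^2\times S^1$, apply Theorem~\ref{thm:main}, and then split into the two cases $\dim\HFred(Y)\in\{0,1\}$---matches the paper exactly. Your derivation of $\dim\HFred(Y)\le 1$ directly from the inequality in Theorem~\ref{thm:main} is fine; the paper instead quotes this bound from \cite[Theorem~1.19]{DLVVW}. The case $\dim\HFred(Y)=1$ is handled identically in both.

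The genuine gap is in your treatment of the L-space case $\dim\HFred(Y)=0$. You correctly obtain the splitting $\HFp(Z)\cong F_W(\HFp(Y))\oplus C$ with $C=\ker F_{\bar W}\subseteq\HFred(Z)$ of dimension~$1$, but the sentence ``a rank comparison should force $C=0$'' is not justified, and I do not believe it can be completed with ordinary Heegaard Floer cobordism maps alone. Nothing in the factorization $F_{\bar W}=F_{h_3}\circ F_{h_2}$ forces $C$ to vanish: on the level of the mapping cone, the constraints that $(\widehat v_{0,*}+T\widehat h_{0,*})$ be invertible over $\F[[T,T^{-1}]$ while $\operatorname{rk}(\widehat v_{0,*}+\widehat h_{0,*})$ drops by~$1$ are algebraically compatible, and the $d$-invariants (the only ordinary HF homology-cobordism invariants in play) already agree. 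The paper resolves this case with an external tool: by Spin$^c$-conjugation invariance, the single $\F$ in $\HFred(Z)$ sits in a self-conjugate Spin$^c$ structure $\mfs$, and then F.~Lin's Pin(2)-monopole correction terms $\alpha,\beta,\gamma$ for $(Z,\mfs)$ cannot all coincide, whereas for the L-space $Y$ they do; since $\alpha,\beta,\gamma$ are integral-homology-cobordism invariants, this is a contradiction. You should either invoke these invariants or supply a concrete HF-only argument; as written, the L-space exclusion is the missing idea.
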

\begin{proof}
By \cite[Theorem 1.19]{DLVVW}, $\dim \HFred(Y) = 0$ or 1.  If $\dim \HFred(Y) = 1$, then $\dim \HFhat(Y) = \dim \HFhat(Z)$, since $\dim \HFred = 1$ implies $\dim \HFhat = |H_1| + 2$ and $|H_1(Y)| = |H_1(Z)|$.  The result follows from Theorem~\ref{thm:main} by applying the arguments in Corollary~\ref{cor:trivial}.  (The fact that $W$ is an integral homology cobordism implies that the relevant surgery is along a nullhomologous knot.)  Next, suppose $\dim \HFred(Y) = 0$.  By the Spin$^c$-conjugation invariance of Heegaard Floer homology, we see that $\dim \HFred(Z, \mathfrak{s}) = 1$ in a self-conjugate Spin$^c$-structure $\mathfrak{s}$.  As shown by F. Lin in \cite{LinStein}, this implies that his correction terms $\alpha, \beta, \gamma$ are not all equal for $\mathfrak{s}$.  However, for an L-space, they are all equal.  This is a contradiction, since $\alpha, \beta, \gamma$ are preserved under integral homology cobordisms for each self-conjugate Spin$^c$ structure.  
\end{proof}

Note that the Brieskorn spheres $\Sigma(2,3,7)$ and $\Sigma(2,3,11)$ satisfy $\dim \HFred = 1$.  

Corollaries~\ref{cor:trivial} and \ref{cor:trivial-rank-1} can be seen as ``manifold versions'' of the following special case of a theorem of Gabai \cite[Theorem 1]{Gabai:superadditive}: a self-ribbon concordance with one minimum and one saddle is trivial.  (This was explained to us by Maggie Miller.)  In fact, one can recover a slight variant of this result using Theorem~\ref{thm:main}.  

\begin{corollary}
Let $K$ be a nullhomologous knot in a rational homology sphere $Y$.  Perform a band-sum with an unknot and denote the resulting knot by $K'$.  Suppose $K'$ is detected by its complement, which we additionally assume is irreducible and boundary irreducible.  If $\CFKi(K) \cong \CFKi(K')$, then $K'$ is isotopic to $K$ and the exterior of the resulting concordance is smoothly the trivial cobordism.  
\end{corollary}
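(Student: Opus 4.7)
The band-sum construction realizes $K'$ as the top of a ribbon concordance $F \colon K \to K'$ in $Y \times I$---an annulus with a single minimum (the birth of the unknotted component $U$) and a single saddle (the band). Dualizing the Morse function on $F$, the exterior $W=(Y\times I)\setminus\nu(F)$, viewed as a smooth cobordism from $E(K)$ to $E(K')$, inherits a handle decomposition relative to $E(K)\times I$ consisting of exactly one $1$-handle (dual to the minimum) and one $2$-handle (dual to the saddle). The plan is to show $W\cong E(K)\times I$: combined with the assumption that $K'$ is detected by its complement, this yields $K\cong K'$ as knots in $Y$, and the irreducibility hypotheses on $E(K')$ confirm that the resulting product structure on $W$ is the trivial concordance exterior.

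First I would fill both ends of $W$ by $0$-framed Dehn surgery, producing a cobordism $W_0\colon Y_0(K)\to Y_0(K')$ of the same handle type and simultaneously capping the annulus $F$ off to an embedded $2$-sphere of self-intersection zero. Turning $W_0$ upside down presents it as a $2$-handle attached along a framed knot $J\subset Y_0(K')$ followed by a $3$-handle, which must be attached along a non-separating sphere so that the bottom end remains connected. Hence the intermediate $3$-manifold is $Y_0(K)\#(S^2\times S^1)$. A first-homology calculation using $b_1(Y_0(K))=b_1(Y_0(K'))=1$ identifies $J$ as nullhomologous in $Y_0(K')$ and pins down the surgery framing as $0$.

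Next I would invoke Theorem~\ref{thm:main}. The hypothesis $\CFKi(K)\cong\CFKi(K')$, via the Ozsv\'ath--Szab\'o mapping cone formula, yields $\HFhat(Y_0(K))\cong \HFhat(Y_0(K'))$. Since Theorem~\ref{thm:main} requires a rational homology sphere ambient, one replaces the $0$-filling at the $K'$-end by $n$-filling for $n\gg 0$, converting $Y_0(K')$ into the rational homology sphere $Y_n(K')$ and carrying $J$ to an analogous nullhomologous knot $J_n$ whose surgery presents $Y_n(K)\#(S^2\times S^1)$. Theorem~\ref{thm:main} then forces $J_n$ to be unknotted and $Y_n(K)\cong Y_n(K')$; consequently the $2$- and $3$-handles of $W_n^{\mathrm{op}}$ cancel smoothly, and undoing the large filling recovers $W\cong E(K)\times I$, as desired.

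The hard part will be the last step: adapting Theorem~\ref{thm:main} to the setting at hand, where the relevant $3$-manifold $Y_0(K')$ is not itself a rational homology sphere. The large-surgery workaround is natural but requires careful tracking of surgery coefficients, and verifying that $J_n$ remains nullhomologous and is surgered with the correct framing to feed Theorem~\ref{thm:main}. Once the manifold-level identification is in hand, upgrading it to a smooth handle cancellation (and not merely a diffeomorphism of the two ends) is what allows the concordance itself, rather than just its ends, to be trivialized.
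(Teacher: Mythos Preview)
Your approach tracks the paper's closely through the point where $J_n$ is shown to be unknotted in $Y_n(K')$ for all large $n$. The gap is in your final step. Knowing $J_n$ bounds a disk in $Y_n(K')$ does not tell you that this disk avoids the filled solid torus, so the handle cancellation you obtain in $W_n$ need not restrict to one in $W$; there is no valid ``undo the filling'' move here. Concretely, $J_n$ unknotted in $Y_n(K')$ for a single $n$---or even for all large $n$---does not by itself force $J$ to be unknotted in $E(K')$, which is what you actually need in order to cancel the $2$- and $3$-handles of the original cobordism $W$.

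The paper closes this gap with a $3$-manifold Dehn-filling argument rather than a $4$-dimensional cancellation. One first places $J$ directly in $E(K')$ (not in $Y_0(K')$), so that the image $J^{(n)}$ in each $Y_n(K')$ is obtained by filling the $K'$-boundary of the link exterior $E(K'\cup J)$. Since $J^{(n)}$ is unknotted in $Y_n(K')$ for infinitely many $n$, the manifold $E(K'\cup J)$ has infinitely many reducible fillings along the $K'$-cusp. By Gordon's finiteness results for exceptional fillings, an irreducible, boundary-irreducible manifold with toral boundary admits only finitely many reducing slopes on any one boundary component; hence $E(K'\cup J)$ is itself reducible or boundary reducible. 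Using the hypotheses on $E(K')$, either case forces $J$ to be unknotted in $E(K')$, and then the $2$- and $3$-handles of $W$ genuinely cancel. This Dehn-filling finiteness step is the missing idea in your outline.
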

Note that if $Y = S^3$ and $K$ is non-trivial, then the hypotheses apply for any $K'$ by \cite{Gordon:ribbon, GordonLuecke}.  For notation, we will write $E(X)$ to denote the exterior of the submanifold $X$.  (The ambient manifold will be clear from context.)
\begin{proof}
Let $C\co (Y,K) \to (Y,K')$ be the ribbon concordance in $Y \times I$ given by a single birth and saddle specified by the band-sum.  Since $K'$ is determined by its complement, it suffices to show that $E(C)$ is smoothly $E(K) \times I$.  

Note that $E(C)$ is an integer homology cobordism from $E(K)$ to $E(K')$ which consists of a single 1-handle and 2-handle addition.  Reversing orientation and flipping upside-down, we see that there exists a knot $J$ knot in $E(K')$ with an $E(K) \# S^2 \times S^1$ surgery.  Since $K$ and $K'$ are nullhomologous, we see that $J$ is necessarily nullhomologous in $E(K')$.  Note that if we can show that $J$ is trivial, then $E(C) = E(K) \times I$ and we are done.  

Write $J^{(n)}$ for the induced knot in $Y_{n}(K')$.  Then, 0-surgery on $J^{(n)}$ results in $Y_{n}(K) \# S^2 \times S^1$.  Since $\CFKi(K) \cong \CFKi(K')$, the large surgery formula of Ozsv\'ath-Szab\'o \cite[Theorem 4.4]{OS:knots} implies that $\dim \HFhat(Y_n(K)) = \dim \HFhat(Y_n(K'))$ for large $n$.  Therefore, by Theorem~\ref{thm:main}, $J^{(n)}$ is unknotted in $Y_n(K')$  for large $n$.  Since $Y_n(K')$ is not $S^3$ for large $n$, it follows that $E(J^{(n)}) = D^2 \times S^1 \# Y_n(K')$ is a reducible manifold for all large $n$.  

In other words, $E(K' \cup J)$ has infinitely many reducible fillings.  However, an irreducible, boundary-irreducible three-manifold with only toral boundary components has at most finitely many reducing fillings along a given boundary component (see for example \cite{Gordon:fillingsurvey}).  Therefore, $E(K' \cup J)$ is either boundary reducible or reducible.  Since $K'$ is non-trivial, if $E(K' \cup J)$ is boundary reducible, then the toral boundary component coming from $J$ must be the one that compresses, and we see that $J$ must be unknotted in the exterior of $K'$ completing the proof.  On the other hand, if $E(K' \cup J)$ is reducible, then $J$ must be contained in an embedded three-ball.  In this case, $E(K')_0(J) = E(K') \# S^3_0(J)$ and hence $J$ is unknotted in the embedded three-ball.  Again, $J$ is trivial in $E(K')$ and we are done.  
\end{proof}

Recently, Conway and Tosun \cite{ConwayTosun} showed that the boundary of a non-trivial Mazur manifold is not an L-space.  Ni has pointed out that an alternate proof follows from \cite{Ni:nonseparating}.  We now show how Theorem~\ref{thm:main} gives another alternate proof of this fact.  (The second author and Pinz\'on-Caicedo have also proved the analogous result in instanton Floer homology.)    
\begin{corollary}[{\cite[Theorem 1]{ConwayTosun}}]
Let $Y \neq S^3$ be a homology sphere bounding a Mazur manifold.  Then $Y$ is not an L-space.  
\end{corollary}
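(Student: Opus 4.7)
The plan is to translate the hypothesis ``$Y$ bounds a Mazur manifold $M$'' into a statement about $0$-surgery on a nullhomologous knot in $Y$ yielding $S^3\#(S^2\times S^1)$, and then apply Theorem~\ref{thm:main} with $N=S^3$.

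First I would turn $M$ upside down. Since $M$ has a handle decomposition with one $0$-handle, one $1$-handle, and one $2$-handle, the dual cobordism from $Y$ to $\emptyset$ is built, in order, from one $2$-handle, one $3$-handle, and one $4$-handle. Let $N_1$ denote the top boundary after the $2$-handle and $N_2$ the top boundary after the $3$-handle. The $4$-handle caps off an $S^3$, so $N_2=S^3$. Dually, $N_1$ is obtained from $N_2=S^3$ by attaching a $1$-handle, which since $S^3$ is connected must have both feet in the same component; hence $N_1\cong S^3\#(S^2\times S^1)\cong S^2\times S^1$. So there is a knot $K\subset Y$ and an integer framing $n$ with $Y_n(K)\cong S^2\times S^1$. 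Because $Y$ is a homology sphere, $K$ is nullhomologous, and comparing $H_1(Y_n(K))\cong\mathbb{Z}/n$ with $H_1(S^2\times S^1)\cong\mathbb{Z}$ forces $n=0$, giving $Y_0(K)\cong S^3\#(S^2\times S^1)$.

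With this surgery description in hand, I would apply Theorem~\ref{thm:main} with $N=S^3$. Suppose for contradiction that $Y$ is an L-space; since $Y$ is a homology sphere, $\dim\HFhat(Y)=1=\dim\HFhat(S^3)$. The equality case of Theorem~\ref{thm:main} then forces $Y\cong S^3$, contradicting $Y\neq S^3$. The only point requiring care is the handle-theoretic reduction at the start---namely verifying that $N_1\cong S^2\times S^1$ and that the framing $n$ is necessarily $0$---but once the surgery $Y_0(K)\cong S^3\#(S^2\times S^1)$ is established, Theorem~\ref{thm:main} finishes the argument in a single line, which is why this corollary is so short.
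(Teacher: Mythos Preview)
Your proof is correct and follows essentially the same approach as the paper: reduce to $Y_0(K)\cong S^2\times S^1$ by turning the Mazur manifold upside down, then invoke Theorem~\ref{thm:main} with $N=S^3$ under the L-space hypothesis $\dim\HFhat(Y)=1=\dim\HFhat(S^3)$. The only cosmetic difference is that the paper cites the ``$K$ is unknotted'' part of Theorem~\ref{thm:main} and then computes $Y_0(K)=Y\#(S^2\times S^1)$ to conclude $Y=S^3$, whereas you cite the ``$N=Y$'' part directly; your route is marginally more efficient but not genuinely different.
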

\begin{proof}
Suppose that $Y$ is an L-space homology sphere which bounds a Mazur manifold.  Then, there exists a knot $K$ in $Y$ such that $Y_0(K) = S^2 \times S^1$.  Since $\dim \widehat{HF}(Y) = \dim \widehat{HF}(S^3)$, Theorem~\ref{thm:main} implies that $K$ is unknotted.  Therefore, $Y_0(K) = Y \# S^2 \times S^1$, and we see that $Y = S^3$.   
\end{proof}

Finally, we also present a symplectic analogue of Corollary~\ref{cor:trivial}.  This was explained to the authors by Steven Sivek.    
\begin{corollary}\label{cor:stein}
Let $Y$ be a rational homology sphere.  Let $W$ be a Stein cobordism from $(Y,\xi)$ to $(Y,\xi')$ comprised of attaching single Weinstein 1- and 2-handles.  If $\xi'$ is tight, then $W$ is deformation equivalent to the (compact) symplectization of $(Y,\xi)$ and hence $\xi$ and $\xi'$ are contactomorphic contact structures.  
\end{corollary}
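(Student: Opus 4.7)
The plan is to first establish that $W$ is smoothly the trivial cobordism via Corollary~\ref{cor:trivial}, and then to upgrade smooth triviality to a Weinstein deformation equivalence using the tightness of $\xi'$.

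For smooth triviality, I would first verify that $W$ is a rational homology cobordism. Turning $W$ upside down gives a cobordism built from $\partial_+ W = Y$ by attaching a 2-handle and a 3-handle, so the relative cellular chain complex $C_*(W,\partial_+ W;\mathbb{Q})$ has nontrivial cells only in degrees $2$ and $3$; in particular $H_1(W,\partial_+ W;\mathbb{Q})=0$. Since $\partial_+ W = Y$ is a rational homology sphere, the long exact sequence of the pair $(W,\partial_+ W)$ forces $H_1(W;\mathbb{Q})=0$. The analogous argument with $\partial_- W$ (or equivalently a short Poincar\'e--Lefschetz computation) gives $H_2(W;\mathbb{Q})=0$. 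Hence $W$ is a rational homology cobordism, and Corollary~\ref{cor:trivial} yields $W \cong Y\times I$ smoothly.

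For Weinstein triviality, smooth triviality means the Legendrian attaching circle $\Lambda$ of the 2-handle in the subcritical Weinstein boundary $(Y \# S^2\times S^1,\xi\#\xi_{\mathrm{std}})$ is smoothly isotopic to a standard canceling curve meeting the belt sphere $S^2$ of the 1-handle transversely in a single point. By Eliashberg's Weinstein cancellation lemma (cf.\ Cieliebak--Eliashberg, \emph{From Stein to Weinstein and Back}), if $\Lambda$ is Legendrian-isotopic to this canceling curve, then the 1-handle and 2-handle cancel in the Weinstein category, and $W$ is deformation equivalent to the compact symplectization of $(Y,\xi)$; the contactomorphism $\xi\cong\xi'$ is then immediate. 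The tightness of $\xi'$ enters precisely to constrain $\Lambda$ to the canceling Legendrian class: if $\Lambda$ were Legendrian-distinct from the canceling curve (for example, a Legendrian stabilization), Weinstein surgery along $\Lambda$ would introduce an overtwisted disk into the outgoing contact structure, contradicting tightness.

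The principal obstacle is this last point---rigorously using tightness of $\xi'$ to restrict the Legendrian isotopy class of $\Lambda$ so that Eliashberg's cancellation lemma produces a deformation to the symplectization rather than some nontrivial (overtwisted) Weinstein cobordism. The smooth step is a direct application of Corollary~\ref{cor:trivial} once the rational homology cobordism property is verified, but the symplectic upgrade relies on nontrivial input from Weinstein deformation theory.
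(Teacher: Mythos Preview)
Your smooth step via Corollary~\ref{cor:trivial} is fine and matches the paper's use of Theorem~\ref{thm:main}. The gap is in the symplectic upgrade: your mechanism for invoking tightness is incorrect. You assert that if $\Lambda$ were a Legendrian stabilization of the canceling curve, Weinstein surgery along it would produce an overtwisted contact structure. But Weinstein 2-handle attachment to a Stein domain always yields a Stein domain, so the resulting contact boundary is Stein fillable and hence tight regardless of which Legendrian representative you attach along. Tightness of $\xi'$ therefore cannot be used to rule out stabilizations in the way you describe, and you are left with the genuine problem of classifying Legendrians in the smooth class of the $S^1$-factor inside $(Y\#S^2\times S^1,\xi\#\xi_{\mathrm{std}})$, which you have not addressed.

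The paper sidesteps this by dualizing. Using Ding--Geiges, the Weinstein 2-handle is inverted to a contact $(+1)$-surgery on a Legendrian $\mathcal{K}'$ in $(Y,\xi')$ itself, rather than working with $\Lambda$ in the connected sum. Theorem~\ref{thm:main} (not just Corollary~\ref{cor:trivial}) shows $\mathcal{K}'$ is a smooth unknot in $Y$; the surgery framing forces $tb(\mathcal{K}')=-1$. Now the tightness of $\xi'$ enters through Eliashberg--Fraser: in any tight contact 3-manifold, a Legendrian unknot with $tb=-1$ has $r=0$ and is Legendrian isotopic to the standard one. This pins down $\mathcal{K}'$ up to Legendrian isotopy and hence determines the cobordism, which can then be compared to the canceling pair. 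The key lemma you are missing is precisely this Eliashberg--Fraser classification, applied on the $(Y,\xi')$ side after dualizing.
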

\begin{proof}
Consider the $(tb-1)$-framed 2-handle attachment to a Legendrian $\mathcal{K}$ in $(Y \# S^2 \times S^1, \xi \# \xi_{std})$ which results in $(Y,\xi')$.  By reversing this picture, we see that there is a Legendrian knot $\mathcal{K}'$ in $(Y,\xi')$ with a contact $+1$-surgery to $(Y \# S^2 \times S^1, \xi \# \xi_{std})$ by \cite[Proposition 8]{DingGeiges}.  Note that $\mathcal{K}'$ must be nullhomologous and the framing of the surgery must be the Seifert framing in order to add a $\mathbb{Z}$-summand to $H_1$.  Now, by Theorem~\ref{thm:main}, $\mathcal{K}'$ is unknotted topologically.  Since $+1$-contact surgery means that the topological framing is one more than $tb$, we see that $tb = -1$.  Because $\xi'$ is tight, this implies $r = 0$ by \cite[Theorem 1.6]{EliashbergFraser}, and all such Legendrian unknots are Legendrian isotopic by \cite[Theorem 1.5]{EliashbergFraser}.  

This implies that all Stein cobordisms from $(Y,\xi'')$ to $(Y,\xi')$ built out of single Weinstein 1- and 2-handles are equivalent, regardless of $\xi''$.  However, we can produce such a cobordism by using a cancelling Weinstein 1- and 2-handle pair, i.e., the trivial cobordism.  
\end{proof}

Finally, we give a new obstruction to a homology sphere admitting an $S^2 \times S^1$ surgery (and hence bounding a Mazur manifold).    
\begin{proposition}\label{prop:red-1}
Let $K$ be a knot in a homology sphere with $\HF_{\textup{red},i}(Y) = \F$ for some $i$.  Then $Y_0(K) \neq S^2 \times S^1$.  
\end{proposition}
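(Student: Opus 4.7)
Suppose for contradiction that $Y_0(K)=S^2\times S^1$. Writing $S^2\times S^1=S^3\#(S^2\times S^1)$, Theorem~\ref{thm:main} applies with $N=S^3$. The equality case would give $Y=S^3$ and $K$ unknotted, contradicting $\HF_{\textup{red},i}(Y)=\F\ne 0$; so we must lie in the strict-inequality case, which only yields $\dim\HFhat(Y)>1$. This is consistent with the hypothesis, so a further argument is required.

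Next, I would apply the Ozsv\'ath--Szab\'o $d$-invariant formulas for $0$-surgery on a null-homologous knot in a homology sphere. In the torsion spin$^c$ structure $\mfs_0$, the two $d$-invariants $d_{\pm 1/2}(Y_0(K),\mfs_0)$ are expressible as $d(Y)\pm\tfrac{1}{2}$ minus twice a non-negative Ni--Wu-type invariant of $(Y,K)$. Equating with the known values $\pm\tfrac{1}{2}$ for $S^2\times S^1$ and using the non-negativity constraints gives $d(Y)\ge 0$; applying the same analysis to $-Y$ and $-K$ (using the amphichirality $-S^2\times S^1\cong S^2\times S^1$) gives $d(Y)\le 0$. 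Hence $d(Y)=0$, and both Ni--Wu invariants vanish.

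Finally, with $d(Y)=0$ and the rank-one reduced class guaranteed by $\HF_{\textup{red},i}(Y)=\F$, I would turn to the Ozsv\'ath--Szab\'o integer-surgery mapping cone formula \cite{OS:knots} to compute $\HFhat(Y_0(K),\mfs_0)$ from $\CFKi(Y,K)$ and $\HFhat(Y)$. The isolated rank-one reduced generator in grading $i$ cannot be cancelled by the cone differential without producing an extra class in the homology, forcing $\dim\HFhat(Y_0(K),\mfs_0)\ge 3$; this contradicts $\HFhat(S^2\times S^1,\mfs_0)=\F^2$.

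\textbf{Main obstacle.} The final step is the most delicate: one must verify that the rank-one reduced class in the isolated grading $i$ indeed survives the mapping cone differential to produce an uncancellable generator. The hypothesis ``for some $i$'' is essential here, as a single reduced generator alone in its grading has no partner to cancel against.
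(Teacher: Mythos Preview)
Your proposal correctly identifies that Theorem~\ref{thm:main} alone does not suffice, but the remainder is incomplete in the place you yourself flag, and the mechanism you suggest for closing the gap is not the right one.

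The $d$-invariant detour is unnecessary and does not feed into the final step: knowing $d(Y)=0$ and that the Ni--Wu invariants vanish constrains only how the tower part of the mapping cone behaves, not the reduced class in grading $i$. More importantly, your intuition that the rank-one reduced generator ``has no partner to cancel against'' is not a proof. In the cone of $v^\circ_0+h^\circ_0$, the class on the $B$-side in grading $i$ could in principle be hit by an element of $H_i(A^\circ_0)$ and cancel cleanly; nothing in your outline rules this out.

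The missing ingredient is \emph{conjugation symmetry}. The paper argues directly in $\HF^+$: after reversing orientation if necessary so that $i$ is odd (hence disjoint from the tower, as $d(Y)\in 2\Z$), one has $H_i(B^+_0)=\HF^+_i(Y)=\F$. Proposition~\ref{prop:0surgery} (the twisted-coefficient isomorphism) forces $H_i(A^+_0)=\F$ as well, so in grading $i$ the maps $v^+_{0,*},h^+_{0,*}\colon\F\to\F$ are each either $0$ or the identity. Spin$^c$ conjugation interchanges $v^+_0$ and $h^+_0$, so they are equal; over $\F_2$ their sum is therefore zero in grading $i$. This produces an odd-degree element in the kernel of $v^+_{0,*}+h^+_{0,*}$ that cannot be absorbed into either tower of $\HF^+(S^2\times S^1)\cong\mathcal{T}^+\oplus\mathcal{T}^+$, giving the contradiction. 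Your sketch never invokes conjugation, and without it the ``survival'' claim does not follow.
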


\begin{remark}
It is easy to see that if a nullhomologous knot in a rational homology sphere admits a 0-surgery with an $S^2 \times  S^1$ summand, then its Alexander polynomial is trivial (i.e., constant).  We leave it as a fun exercise for the reader to deduce this fact using Heegaard Floer homology after reading the arguments in this paper.
\end{remark}

\section*{Organization} The key idea in the proof of Theorem \ref{thm:main} comes from the special property of the twisted Heegaard Floer homology of three-manifolds with non-separating $S^2$'s.  (This has been used in \cite{Ni:HFfibred} and \cite{Ni:nonseparating}; see also \cite{HeddenYi:smallHF}, \cite{HeddenYi:KhovanovUnlink}, and \cite{AlishahiLipshitz}.)  In the next section, we review the mapping cone formula in Heegaard Floer homology, with extra attention to twisted coefficients, and prove Theorem~\ref{thm:main}. Lastly, we prove Proposition~\ref{prop:red-1}.

\section*{Acknowledgements} The authors thank Maggie Miller for helpful conversations and Steven Sivek for describing the proof of Corollary~\ref{cor:stein}.  They also thank Matt Hedden and Yi Ni for helpful comments on an earlier draft of this paper.

\section{The mapping cone}\label{sec:cone}
We assume that the reader is familiar with the knot Floer chain complex of a knot $\CFKi$, and the mapping cone formula for the Heegaard Floer homology of $0$-surgery along a nullhomologous knot $K$ in a rational homology sphere $Y$ \cite[Section 4.8]{OS:integer}. We briefly recall the formula here, primarily to establish notation.  Let $\mathfrak{t}$ denote a Spin$^c$ structure on $Y$. As a vector space, we have that $C = CFK^\infty(Y,K,\mft)$ decomposes as a direct sum $C = \bigoplus_{i,j \in \Z} C(i,j)$. For any set $X \subset \Z^2$ which is convex with respect to the product partial order on $\Z^2$ (i.e., if $a < b < c$ and $a, c \in X$, then $b \in X$), let $CX = \bigoplus_{(i,j) \in X} C(i,j)$ which is naturally a subquotient complex of $C$.

Let $B^+_s$ (respectively $\widehat{B}_s$) denote $C\{i \geq 0\}$ (respectively $C\{i = 0\}$), and $A^+_s$ (respectively $\widehat{A}_s$) denote $C\{\max(i,j-s) \geq 0\}$ (respectively $C\{\max(i,j-s)=0\}$).  Recall the maps $v^+_s, h^+_s: A^+_s \to  B^+$ and $\widehat{v}_s, \widehat{h}_s: \widehat{A}_s \to \widehat{B}$.  The main fact that we will need is that $\widehat{v}_s$ factors through $\widehat{v}_{s'}$ for $s' \geq s$. 

Let $\widehat{F} \subset Y_0(K)$ denote the surface obtained by capping off an oriented Seifert surface $F$ for $K$. As usual, we let $\mft_s$ denote the Spin$^c$ structure on $Y_0(K)$ which satisfies $\langle c_1(\mft_s), [\widehat{F}]\rangle = 2s$ and such that $\mft_s$ extends $\mft$ over the 0-framed 2-handle cobordism from $Y$ to $Y_0(K)$. 
In what follows, let $\circ$ denote either $+$ or $\; \widehat{\;}$.  

\begin{theorem}[{\cite[Theorem 9.19]{OS:properties}, see also \cite[Section 4.8]{OS:integer}}]\label{thm:cone}
Let $Y$ be a rational homology sphere and $K \subset Y$ a null-homologous knot. With notation as above,
\[
	HF^\circ(Y_0(K), \mft_s) \cong H_*(\Cone (v_s^\circ + h_s^\circ)).
\]
\end{theorem}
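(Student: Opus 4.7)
My plan is to reduce Theorem~\ref{thm:cone} to the \emph{large surgery formula} of Ozsv\'ath--Szab\'o combined with the surgery exact triangle, which is essentially their original strategy for proving integer surgery formulas.

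First, I would invoke the large surgery formula: for $n \gg 0$, there is a chain homotopy equivalence identifying the subquotient complex $A_s^\circ$ with $CF^\circ(Y_n(K), \mft_{s,n})$, where $\mft_{s,n}$ is the Spin$^c$ structure on $Y_n(K)$ restricting to $\mft$ with $\langle c_1(\mft_{s,n}), [\widehat{F}]\rangle + n = 2s$. This is proved by analyzing a Heegaard diagram for $Y_n(K)$ adapted to $K$, in which $A_s^\circ$ is literally the subcomplex generated by intersection points in the correct Spin$^c$ class. Under this identification, the maps $v_s^\circ, h_s^\circ \co A_s^\circ \to B^\circ$ correspond to the cobordism maps induced by the two 2-handle cobordisms from $Y_n(K)$ to $Y$, associated to the two distinct Spin$^c$ extensions of $\mft_{s,n}$ over the cobordism.

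Next, I would combine the large surgery formula with the surgery exact triangle
\[
\cdots \to HF^\circ(Y_0(K), \mft_s) \to HF^\circ(Y_n(K), \mft_{s,n}) \xrightarrow{F} HF^\circ(Y, \mft) \to \cdots,
\]
where the map $F$ is induced by the 2-handle cobordism. Under the identification above, $F$ becomes $v_s^\circ + h_s^\circ$ (the two Spin$^c$ extensions of the cobordism contribute the two summands). The exact triangle then exhibits $HF^\circ(Y_0(K), \mft_s)$ as the homology of $\Cone(v_s^\circ + h_s^\circ)$, provided one can promote the exact triangle to a chain-level identification.

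The main obstacle is precisely this last promotion: the surgery exact triangle a priori only gives a long exact sequence of homology groups, not a chain-level cone description. To extract the cone, one must identify the connecting homomorphism as being induced by $v_s^\circ + h_s^\circ$ at the chain level, which requires a careful analysis of the holomorphic triangle counts defining the cobordism map and a verification that the large surgery identification intertwines $v_s^\circ + h_s^\circ$ with the chain-level cobordism map. An additional bookkeeping step is needed in the $0$-surgery setting to verify that the Spin$^c$ label $s$ on $Y_0(K)$ (defined via $\langle c_1(\mft_s), [\widehat{F}]\rangle = 2s$) agrees with the index $s$ of $A_s^\circ$ coming through the surgery exact triangle; this is routine once the cobordism Spin$^c$ bookkeeping is set up, but is necessary since $0$-surgery produces a manifold with $b_1 = 1$ and genuinely many Spin$^c$ structures $\mft_s$.
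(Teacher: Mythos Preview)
The paper does not give its own proof of this theorem; it is quoted as a known result from Ozsv\'ath--Szab\'o (the two references in the theorem statement). Your sketch is in the spirit of those proofs, and you correctly flag the two genuine technical issues (lifting the triangle to a chain-level quasi-isomorphism, and matching the Spin$^c$ labels).

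One caution on the details: the exact triangle you write for the triple $(Y_0(K), Y_n(K), Y)$, with a single fixed Spin$^c$ structure on each term, does not exist in that form for $n>1$, since the framings $(\infty,0,n)$ form a surgery triad only when $n=\pm 1$. Also, the cobordism from $Y_n(K)$ to $Y$ has infinitely many Spin$^c$ extensions of $\mft_{s,n}$, not two; the point is that for $n$ large only two of them induce nonzero maps, and these are the ones identified with $v_s$ and $h_s$. The actual argument in \cite{OS:integer} establishes the mapping cone formula for arbitrary integer $n$-surgery by combining the large surgery identification with the genuine triad $(\infty,m,m+1)$ and inducting downward in $m$; the $n=0$ case then visibly decomposes as a direct sum over $s$, yielding the stated cone for each $\mft_s$. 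So your outline needs this repackaging, but the ingredients you list are the right ones.
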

There is also a version of Theorem \ref{thm:cone} with twisted coefficients, as in \cite[Section 8]{OS:properties}; see also \cite[Section 2]{JabukaMark} and \cite[Section 2]{LevineRuberman}. Let $T$ be a generator of $H^1(Y_0(K); \Z)$. Consider the map
\[
	v^\circ_s + T h^\circ_s \co A^\circ_s \otimes_\F \F[T, T^{-1}] \to B^\circ_s \otimes_\F \F[T, T^{-1}] 
\]
We have the following mapping cone formula with twisted coefficients.  We write $HF^\circ(Y_0(K), \mft_s; \F[T,T^{-1}])$ to denote the Heegaard Floer homology with totally twisted coefficients.  We will also write $HF^\circ(Y_0(K), \mft_s;\F[[T,T^{-1}])$ to be the homology of the chain complex obtained by tensoring the twisted Heegaard Floer chain complex $CF^\circ(Y_0(K), \mft_s; \F[T,T^{-1}])$ with $\F[[T,T^{-1}]$ over $\F[T,T^{-1}]$.   

\begin{theorem}[{\cite[Theorem 9.23]{OS:properties}, see also \cite[Theorem 2.3]{LevineRuberman}}]\label{thm:twistedcone}
Let $Y$ be a rational homology sphere and $K \subset Y$ a nullhomologous knot. With notation as above,
\[
	HF^\circ(Y_0(K), \mft_s; \F[T, T^{-1}]) \cong H_*(\Cone (v^\circ_s + T h^\circ_s)).	
\]
\end{theorem}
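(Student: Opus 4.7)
The plan is to adapt the proof of the untwisted mapping cone formula (Theorem~\ref{thm:cone}) to totally twisted $\F[T,T^{-1}]$-coefficients. Recall that the untwisted proof, as in \cite[Section 4.8]{OS:integer}, assembles three ingredients: the integer surgery exact triangle relating $Y$, $Y_n(K)$, and $Y_{n+1}(K)$ for each integer $n$; the large surgery formula identifying $HF^\circ(Y_n(K),\mft_s)$ with the homology of $A_s^\circ$ for $n \gg 0$; and a Spin$^c$-refined truncation argument passing from integer to $0$-surgery. Each of these three pieces must be upgraded so that the chain complexes and maps involved are modules and morphisms over $\F[T,T^{-1}]$.

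First, I would develop the twisted integer surgery exact triangle following \cite[Section 8]{OS:properties}, in which the cobordism maps are weighted by the intersection of the core of each 2-handle with a fixed homology generator. Taking $T$ to correspond to the class Poincar\'e dual to the capped-off Seifert surface $\widehat F \subset Y_0(K)$, one checks that precisely one of the three cobordisms in the triangle crosses $\widehat F$ once while the other two do not; this is the geometric origin of the $T$-twist. Next, I would verify that the large surgery identification remains $\F[T,T^{-1}]$-linear: for $n$ large the relevant Heegaard diagrams, after the standard winding argument, can be arranged so that the holomorphic triangles counted in the identification miss the capped-off surface, whence the twisted complex becomes simply $A_s^\circ \otimes \F[T,T^{-1}]$ with trivial $T$-action.

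The principal obstacle is tracking the $T$-factors in front of $v_s^\circ$ and $h_s^\circ$. Geometrically, $v_s^\circ$ corresponds to a 2-handle cobordism whose core disk caps off a meridian and may be arranged disjoint from $\widehat F$, while $h_s^\circ$ corresponds to a cobordism whose core disk intersects $\widehat F$ algebraically once. The twisted cobordism map therefore picks up exactly one power of $T$ on the $h_s^\circ$ component and no twist on the $v_s^\circ$ component. Combining the twisted surgery triangle, the twisted large surgery identification, and the truncation of the resulting inverse system then yields the desired quasi-isomorphism $HF^\circ(Y_0(K),\mft_s;\F[T,T^{-1}]) \cong H_*(\Cone(v_s^\circ + T h_s^\circ))$. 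A secondary bookkeeping issue, in the $\circ = +$ case, is to confirm that tensoring with $\F[T,T^{-1}]$ commutes with the inverse limit that computes $HF^+$ from its truncations; this is routine because the twisting is by a single variable and the relevant complexes are finitely generated in each Spin$^c$ and grading.
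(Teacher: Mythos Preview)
The paper does not supply its own proof of this statement: Theorem~\ref{thm:twistedcone} is quoted directly from the literature, with attribution to \cite[Theorem 9.23]{OS:properties} and \cite[Theorem 2.3]{LevineRuberman}, and is used as a black box. There is therefore nothing in the paper to compare your argument against.

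That said, your outline is a reasonable sketch of one route to the result, namely lifting the integer surgeries formula of \cite{OS:integer} to twisted coefficients and then specializing to framing $0$. A few cautions. First, the original source \cite[Theorem~9.23]{OS:properties} predates \cite{OS:integer} and proceeds differently: it constructs the twisted surgery long exact sequence directly from a Heegaard triple, with the $T$-weight entering through the area (or period) of a closed $2$-form dual to $\widehat F$ evaluated on holomorphic triangles; the mapping cone statement for $0$-surgery is then read off from that sequence together with the large surgeries identification. Second, your phrase ``the integer surgery exact triangle relating $Y$, $Y_n(K)$, and $Y_{n+1}(K)$'' is not the triangle actually used; the relevant triangles involve $Y$, $Y_0(K)$, and $Y_n(K)$ (or its iterated form), and it is in the map out of $Y_0(K)$ that the twisting by $T$ appears. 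Third, for the specific $0$-surgery statement here no inverse limit or truncation is needed: a single large-$n$ triangle suffices, so the bookkeeping issue you flag does not arise. If you want a proof matching the citation, follow \cite{OS:properties} directly rather than routing through \cite{OS:integer}.
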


We will be interested in the following consequence of the preceding theorem.

\begin{corollary}\label{cor:NovikovT}
Let $Y$ be a rational homology sphere and $K \subset Y$ nullhomologous. Then $HF^\circ(Y_0(K), \mft_s; \F[[T, T^{-1}])$ is isomorphic to the homology of the cone of
\[ v^\circ_s + Th^\circ_s \co A^\circ_s \otimes_\F \F[[T, T^{-1}] \to B^\circ_s \otimes_\F \F[[T, T^{-1}]. 
\]
\end{corollary}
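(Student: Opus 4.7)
The plan is to deduce the corollary from Theorem~\ref{thm:twistedcone} by a flat base change argument. The key observation is that $\F[[T, T^{-1}]$ is in fact a \emph{field}: any nonzero formal Laurent series $\sum_{n \geq -N} a_n T^n$ with $a_{-N} \neq 0$ factors as $a_{-N} T^{-N}$ times a power series in $T$ with constant term $1$, and such power series are invertible. In particular, $\F[[T, T^{-1}]$ is torsion-free as a module over the PID $\F[T, T^{-1}]$, hence flat over $\F[T, T^{-1}]$.

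Given flatness, the universal coefficient theorem for modules over a PID, together with the vanishing of $\mathrm{Tor}$ against a flat module, implies that tensoring any chain complex of $\F[T, T^{-1}]$-modules with $\F[[T, T^{-1}]$ over $\F[T, T^{-1}]$ commutes with taking homology. Applying this to the isomorphism of Theorem~\ref{thm:twistedcone}, I would obtain
\[
    HF^\circ(Y_0(K), \mft_s; \F[T,T^{-1}]) \otimes_{\F[T,T^{-1}]} \F[[T, T^{-1}] \;\cong\; H_*\bigl(\Cone(v^\circ_s + T h^\circ_s)\bigr) \otimes_{\F[T,T^{-1}]} \F[[T, T^{-1}].
\]
By definition the left-hand side is $HF^\circ(Y_0(K), \mft_s; \F[[T, T^{-1}])$, while on the right-hand side flatness again identifies $H_*(-) \otimes \F[[T,T^{-1}]$ with the homology of the tensored complex.

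Finally, the cone construction commutes with tensor product in an evident way: as a graded module, $\Cone(f)$ is just $A \oplus B[1]$ with a differential assembled from $d_A$, $d_B$, and $f$, so
\[
    \Cone(f) \otimes_{\F[T,T^{-1}]} \F[[T, T^{-1}] \;\cong\; \Cone\bigl(f \otimes \mathrm{id}_{\F[[T,T^{-1}]}\bigr),
\]
which for $f = v^\circ_s + T h^\circ_s$ gives the cone appearing in the statement. Stringing these identifications together yields the corollary. The only nontrivial step is the flatness check, which as noted above is immediate once one recognizes that $\F[[T, T^{-1}]$ is a field; I do not anticipate any serious obstacles.
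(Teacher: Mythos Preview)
Your proof is correct and follows the same approach as the paper, whose proof is the single sentence that the result follows from Theorem~\ref{thm:twistedcone} together with the flatness of $\F[[T, T^{-1}]$ over $\F[T, T^{-1}]$. You have simply unpacked this one line, including the justification for flatness and the compatibility of cones with tensor products; the only minor imprecision is that the identification of the left-hand side with $HF^\circ(Y_0(K),\mft_s;\F[[T,T^{-1}])$ is not literally ``by definition'' (the paper defines it by tensoring at the chain level) but is itself another application of flatness, which you have already invoked.
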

\begin{proof}
The result follows from Theorem \ref{thm:twistedcone} and the fact that $\F[[T, T^{-1}]$ is flat over $\F[T, T^{-1}]$.
\end{proof}

We recall one key property of the Heegaard Floer homology of three-manifolds with non-separating two-spheres.  If $M$ is a three-manifold which contains a non-separating two-sphere $S$, then $HF^\circ(M;\F[[T,T^{-1}]) = 0$, where $T$ denotes a generator of $H^1$ of the $S^2 \times S^1$ summand \cite[Lemma 2.1]{Ni:HFfibred}. Further, if $\mfs$ is a Spin$^c$ structure on $M$ such that $\langle c_1(\mfs), [S] \rangle = 0$, then $HF^\circ(M,\mfs) = 0$ \cite[Theorem 1.4]{OS:properties}.  With this, we analyze the mapping cone formula for knots which surger to three-manifolds with non-separating two-spheres.

\begin{proposition}\label{prop:0surgery}
Let $Y$ be a rational homology sphere and $K \subset Y$ a nullhomologous knot.  Suppose $Y_0(K) = N \# S^2 \times S^1$.  Let $\circ = +$ or $\; \widehat{\;}$.  Then $v_{s,*}^\circ + h_{s,*}^\circ: H_*(A^\circ_s) \to HF^\circ(Y,\mft)$ is an isomorphism for all $s \neq 0$.  Further, $v_{s,*}^\circ + T h_{s,*}^\circ: H_*(A^\circ_s)\otimes_\F \F[[T,T^{-1}] \to HF^\circ(Y, \mft)\otimes_\F \F[[T,T^{-1}]$ is an isomorphism for all $s$.  In particular, $\dim H_*(\widehat{A}_s) = \dim \HFh(Y, \mft)$ for all $s$.
\end{proposition}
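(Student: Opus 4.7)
The plan is to combine the mapping cone formulas of Theorem~\ref{thm:cone} and Corollary~\ref{cor:NovikovT} with the two vanishing results for Heegaard Floer homology of three-manifolds containing a non-separating two-sphere that were recalled just before the proposition.

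Step one is to pin down the Spin$^c$ bookkeeping. Since $Y_0(K) = N \# (S^2 \times S^1)$ and $b_1(Y_0(K)) = 1$, the summand $N$ is necessarily a rational homology sphere, so $H_2(Y_0(K); \Z) \cong \Z$ is generated both by the capped Seifert surface $[\widehat{F}]$ and by the sphere factor $[S]$ of the connect summand. Hence $[\widehat{F}] = \pm[S]$, and $\langle c_1(\mft_s), [S]\rangle = \pm 2s$. For $s \neq 0$ this pairing is nonzero, so $\mft_s$ restricts to a non-torsion Spin$^c$ structure on the $S^2 \times S^1$ factor; by a K\"unneth-type vanishing for connect sums (equivalently, the non-separating sphere vanishing cited from \cite{OS:properties}), $HF^\circ(Y_0(K), \mft_s) = 0$. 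Theorem~\ref{thm:cone} identifies this vanishing group with $H_*(\Cone(v_s^\circ + h_s^\circ))$, and the long exact sequence of the cone then forces $v_{s,*}^\circ + h_{s,*}^\circ \co H_*(A_s^\circ) \to HF^\circ(Y, \mft)$ to be an isomorphism, establishing the first assertion.

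For the twisted claim, I would invoke Ni's vanishing $HF^\circ(Y_0(K); \F[[T, T^{-1}]) = 0$, where $T$ is dual to the $S^1$ factor. This vanishing is unconstrained by Spin$^c$ structure, so $HF^\circ(Y_0(K), \mft_s; \F[[T, T^{-1}]) = 0$ for every $s$. Corollary~\ref{cor:NovikovT} then converts this into acyclicity of the Novikov cone of $v_s^\circ + T h_s^\circ$, and flatness of $\F[[T, T^{-1}]$ over $\F$ commutes tensor with homology, yielding that $v_{s,*}^\circ + T h_{s,*}^\circ$ is an isomorphism after tensoring with the Novikov ring.

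Finally, the dimension equality should follow because $\F[[T, T^{-1}]$ is a field (the fraction field of $\F[[T]]$), so tensoring a finite-dimensional $\F$-vector space over $\F$ with it preserves rank; applying this to the case $\circ = \widehat{\;}$ of the isomorphism from the twisted claim gives $\dim_\F H_*(\widehat{A}_s) = \dim_\F \widehat{HF}(Y, \mft)$ for every $s$. The main thing to be careful about is the Spin$^c$ identification in step one, namely that $[\widehat{F}]$ and the non-separating sphere generator $[S]$ agree up to sign in $H_2(Y_0(K))$; once that is in hand, both claims reduce to invoking the appropriate cited vanishing and reading off the long exact sequence of the mapping cone.
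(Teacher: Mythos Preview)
Your proof is correct and follows essentially the same route as the paper. Both arguments reduce the first claim to the vanishing of the untwisted $HF^\circ(Y_0(K),\mft_s)$ for $s\neq 0$ together with Theorem~\ref{thm:cone}, and the second claim to Ni's twisted vanishing combined with Corollary~\ref{cor:NovikovT}; your write-up simply makes explicit the Spin$^c$ identification $[\widehat F]=\pm[S]$ and the field property of $\F[[T,T^{-1}]$ that the paper leaves implicit.
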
 

\begin{proof}
The first claim follows from Theorem~\ref{thm:cone} and that $Y_0(K)$ contains a non-separating two-sphere.  

Now, for the second claim, fix $\mft$ in Spin$^c(Y)$.  Let $\mft'$ denote the Spin$^c$ structure on $N$ which is cobordant to $\mft$ under the homology cobordism from $Y$ to $N$ obtained by attaching a 3-handle to the trace of 0-surgery on $K$.  
Since $Y_0(K) = N \# S^2 \times S^1$, we have that $HF^+(Y_0(K), \mft_s; \F[[T, T^{-1}])=0$. By Corollary \ref{cor:NovikovT}, we have that 
\[ HF^+(Y_0(K), \mft_s; \F[[T, T^{-1}]) \cong H_*(\Cone (v^+_s + Th^+_s)\otimes_{\F[T, T^{-1}]} \F[[T, T^{-1}]). \]
Hence
\[ (v^+_s + Th^+_s)_* \co H_*(A^+_s \otimes_\F \F[[T, T^{-1}]) \to H_*(B^+_s \otimes_\F \F[[T, T^{-1}] ) \]
is an isomorphism of $\F[[T, T^{-1}]$-modules.  
The analogous result for the hat flavor follows immediately.  
\end{proof}

\begin{proof}[Proof of Theorem \ref{thm:main}]
As before, fix $\mft$ in Spin$^c(Y)$.  Let $\mft'$ denote the Spin$^c$ structure on $N$ which is cobordant to $\mft$ under the homology cobordism from $Y$ to $N$ obtained by attaching a 3-handle to the trace of 0-surgery on $K$. Suppose that $\dim_\F \HFh(Y, \mft) \leq \dim_\F \HFh(N, \mft')$. We then have
\begin{align*}
	2 \dim_\F \HFh(N, \mft') &= \dim_\F (\HFh(N \# S^2 \times S^1, \mft' \# \mfs_0)  \\
		&= \dim_\F (H_*(\Cone(\widehat{v}_0 + \widehat{h}_0)) \\
 		&= \dim_\F H_*(\widehat{A}_0) + \dim_\F \HFh(Y,\mft) - 2 \rk (\widehat{v}_{0,*} + \widehat{h}_{0,*}) \\
		&= 2 \dim_\F \HFh(Y, \mft) - 2 \rk (\widehat{v}_{0,*} + \widehat{h}_{0,*}) \\
		&\leq 2 \dim_\F \HFh(N, \mft') - 2 \rk (\widehat{v}_{0,*} + \widehat{h}_{0,*}),
\end{align*}
where the first equality follows from the K\"unneth formula, the second from Theorem \ref{thm:twistedcone}, the third from rank-nullity (and the fact that we are working over a field), the fourth from Propsition \ref{prop:0surgery}, and the final inequality by hypothesis.
Hence, we see that $\widehat{v}_{0,*} =\widehat{h}_{0,*}$. Therefore,
\[ (1+ T)\widehat{v}_{0,*} \co H_*(\widehat{A}_0 \otimes_\F \F[[T, T^{-1}]) \to H_*(\widehat{B}_0 \otimes_\F \F[[T, T^{-1}] ) \]
is an isomorphism. This implies that $\widehat{v}_{0,*}$ is an isomorphism.

We now consider the case $s > 0$. As mentioned above, $\widehat{v}_{0,*}$ factors through $\widehat{v}_{s,*}$. In particular, since $\widehat{v}_{0,*}$ is an isomorphism, we have that $\widehat{v}_{s,*}$ is surjective. Therefore, it suffices to show that $\dim H_*(\widehat{A}_s) = \dim \HFh(Y, \mft)$.  This again follows from Proposition~\ref{prop:0surgery}. Since $\widehat{v}_{s,*}$ is an isomorphism if and only if $v^+_{s,*}$ is an isomorphism, it follows from \cite[Theorem 1.2]{OS:genus} (which holds for nullhomologous knots in arbitrary rational homology spheres) and \cite[Proof of Lemma 8.1]{OS:rational} that
\[
g(K) = \min \{ s \mid \widehat{v}_{i,*} \text{ is an isomorphism for all }i\geq s, \ \mft \in \text{Spin}^c(Y) \} \leq 0,
\]
which gives the desired result.  
\end{proof}

\begin{proof}[Proof of Proposition \ref{prop:red-1}]
This is very similar to the proof of Theorem~\ref{thm:main}.  After a possible orientation reversal, we may assume that $HF_{\textup{red},i}(Y) = \F$ and $i$ is odd.  By Proposition~\ref{prop:0surgery}, $H_i(A^+_{0}) = \F$, and $v^+_{0,*} + T h^+_{0,*} : H_i(A^+_0)  \otimes_\F \F[[T, T^{-1}] \to H_i(B^+_0) \otimes_\F \F[[T, T^{-1}]$ is an isomorphism.  (Here, we are using the fact that $v^+_0$ and $h^+_0$ are homogeneous of the same grading shift.  This is not true for $s \neq 0$.)  Restricted to this grading, this latter map can be written as $v^+_0 + T h^+_0 \co \F[[T,T^{-1}] \to \F[[T,T^{-1}]$.  It follows that either $v^+_0$ or $h^+_0$ must be non-zero as a map from $H_i(A^+_0) = \F$ to $H_i(B^+_0) = \F$.  By conjugation invariance \cite[Theorem 3.6]{OS:smooth4}, we have that $v^+_0$ is non-zero if and only if $h^+_0$ is non-zero, and so they must be equal.  Therefore, $v^+_{0,*}  = h^+_{0,*}$ as maps from $H_i(A^+_0)$ to $H_i(B^+_0)$, and we see that the kernel of $v^+_{0,*} + h^+_{0,*}$ contains an $\F$ in grading $i$, which is odd.  

Consider the homology of the cone of $v^+_{0,*} + h^+_{0,*} \co H_*(A^+_0) \to H_*(B^+_0)$.  This has two towers: one from the kernel of $v^+_{0,*} + h^+_{0,*}$ and one from the cokernel.  We also know there is an additional generator in the kernel of $v^+_{0,*} + h^+_{0,*}$ in degree $i$; this is in opposite parity of the tower found in this kernel.  This contradicts the fact that the homology of the cone of $v^+_{0,*} + h^+_{0,*} \co H_*(A^+_0) \to H_*(B^+_0)$ is the associated graded object for a 2-step filtration on $HF^+(S^2 \times S^1) \cong \mathcal{T}^+ \oplus \mathcal{T}^+$ by the mapping cone formula.  Indeed, some $U$-torsion elements in the cokernel of $v^+_{0,*} + h^+_{0,*}$ could correspond to elements in a tower of $HF^+(S^2 \times S^1)$, but elements in the kernel cannot by $U$-equivariance.    
\end{proof}

\bibliographystyle{alpha}
\bibliography{references}

\end{document}